\newtheorem{theorem}{Theorem}
\title{{\Large Generalized Neural Network Operators with Symmetrized Activations: Fractional Convergence and the Voronovskaya-Damasclin Theorem}}
\author{R\^omulo Damasclin Chaves dos Santos \\ Tecnological Institute of Aeronautics \\ {\small romulosantos@ita.br}}
\date{\today}
\begin{document}
	
	\maketitle
	
\begin{abstract}
This paper explores the asymptotic behavior of univariate neural network operators, with an emphasis on both classical and fractional differentiation over infinite domains. The analysis leverages symmetrized and perturbed hyperbolic tangent activation functions to investigate basic, Kantorovich, and quadrature-type operators. Voronovskaya-type expansions, along with the novel Vonorovskaya-Damasclin theorem, are derived to obtain precise error estimates and establish convergence rates, thereby extending classical results to fractional calculus via Caputo derivatives. The study delves into the intricate interplay between operator parameters and approximation accuracy, providing a comprehensive framework for future research in multidimensional and stochastic settings. This work lays the groundwork for a deeper understanding of neural network operators in complex mathematical.
\newline
\newline
\textbf{Keywords:} Voronovskaya expansions, Symmetrized neural networks, Fractional calculus in machine learning, Neural approximation, Computational neural operators.
\end{abstract}

\tableofcontents
	
\section{Introduction}

The approximation capabilities of neural networks have been a central topic in computational mathematics and artificial intelligence, particularly due to their wide applicability in function approximation, machine learning, and complex systems modeling. Among various activation functions, the hyperbolic tangent function has garnered significant attention for its smoothness and symmetry, which are advantageous for both theoretical and practical implementations \cite{Haykin1998, Anastassiou2011}.

Building upon these foundational concepts, this study focuses on symmetrized and perturbed versions of the hyperbolic tangent function, extending their application to neural network operators designed for the approximation of both ordinary and fractional derivatives. The idea of parameterizing activation functions, as explored by \cite{Anastassiou2023}, adds flexibility to the neural network framework, enabling finer control over approximation properties. This parameterization is particularly relevant for handling complex differential operations on infinite domains, a topic of increasing interest in the literature \cite{Samko1993, Frederico2007}.

A key motivation for this work lies in the growing need for robust methods to approximate solutions of differential equations in applied settings. For example, in fluid dynamics, fractional derivatives are often employed to model non-local and memory effects in turbulent flows, providing a more accurate representation of physical processes \cite{Frederico2007}. Consider the challenge of approximating the fractional Navier-Stokes equations, which describe the behavior of incompressible flows with fractional viscosity terms. Accurate numerical methods for such problems require operators capable of approximating fractional derivatives over complex domains while maintaining convergence and stability. The symmetrized neural network operators proposed in this work offer a promising approach to tackle such problems, as they combine the advantages of classical approximation theory with the flexibility of neural networks.

Similarly, in signal processing, fractional calculus has been applied to model systems with hereditary properties or to improve the performance of edge detection algorithms. For instance, the use of Caputo derivatives allows for precise modeling of signal behavior at various scales \cite{Diethelm2010}. By leveraging the Voronovskaya-Damasclin theorem introduced in this paper, one can obtain rigorous error bounds for neural network-based approximations, ensuring reliable performance in practical applications such as image enhancement and noise reduction.

The theoretical underpinnings of this work are rooted in Voronovskaya-type asymptotic expansions, which have historically been employed to analyze the convergence of approximation operators. Prior studies, such as those by \cite{Anastassiou1997, Anastassiou2011}, demonstrated the utility of these expansions in understanding the behavior of neural network approximations in both univariate and multivariate contexts. Here, we extend this framework to fractional calculus, leveraging Caputo derivatives and other fractional operators to generalize the classical results \cite{Diethelm2010, ElSayed2006}.

The primary objective of this article is to derive and analyze Voronovskaya-type expansions for neural network operators activated by the symmetrized and perturbed hyperbolic tangent function. The operators considered---basic, Kantorovich, and quadrature types---are analyzed for their approximation properties over infinite domains. By providing rigorous error bounds and exploring the interplay between operator parameters and approximation accuracy, this work aims to bridge the gap between theoretical insights and practical applications in computational mathematics. The results presented here are expected to have broad implications for neural network-based approximations in fields such as fluid dynamics, signal processing, and machine learning.

\section{Mathematical Foundations of Symmetrized Activation Functions}

To establish a robust theoretical framework for symmetrized activation functions, we begin by defining the \textit{perturbed hyperbolic tangent activation function}:

\begin{equation}
	g_{q, \lambda}(x) := \frac{e^{\lambda x} - q e^{-\lambda x}}{e^{\lambda x} + q e^{-\lambda x}}, \quad \lambda, q > 0, \; x \in \mathbb{R}.
\end{equation}

Here, \(\lambda\) is a scaling parameter that controls the steepness of the function, and \(q\) is the deformation coefficient, which introduces asymmetry. This function generalizes the standard hyperbolic tangent function, recovered when \(q = 1\). Notably, \(g_{q, \lambda}(x)\) is odd, satisfying \(g_{q, \lambda}(-x) = -g_{q, \lambda}(x)\).

Next, we construct the \textit{density function}:

\begin{equation}
	M_{q, \lambda}(x) := \frac{1}{4} \big( g_{q, \lambda}(x+1) - g_{q, \lambda}(x-1) \big), \quad \forall x \in \mathbb{R}, \; q, \lambda > 0.
\end{equation}

This density function is carefully designed to ensure both \textit{positivity} and \textit{smoothness}. To confirm positivity, we compute the derivative of \(g_{q, \lambda}(x)\):

\begin{equation}
	\frac{d}{dx} g_{q, \lambda}(x) = \frac{2 \lambda q e^{2 \lambda x}}{(e^{\lambda x} + q e^{-\lambda x})^2} > 0, \quad \forall x \in \mathbb{R}.
\end{equation}

Since \(g_{q, \lambda}(x)\) is strictly increasing, it follows that \(g_{q, \lambda}(x+1) > g_{q, \lambda}(x-1)\), ensuring \(M_{q, \lambda}(x) > 0\).

To introduce symmetry, we define the \textit{symmetrized function}:

\begin{equation}
	\Phi(x) := \frac{M_{q, \lambda}(x) + M_{\frac{1}{q}, \lambda}(x)}{2}.
\end{equation}

To verify that \(\Phi(x)\) is an \textit{even function}, consider:

\begin{equation}
	\Phi(-x) = \frac{M_{q, \lambda}(-x) + M_{\frac{1}{q}, \lambda}(-x)}{2}.
\end{equation}

Using the fact that \(g_{q, \lambda}(x)\) is odd, it follows that \(M_{q, \lambda}(x)\) and \(M_{\frac{1}{q}, \lambda}(x)\) are even, satisfying:

\begin{equation}
	M_{q, \lambda}(-x) = M_{q, \lambda}(x), \quad M_{\frac{1}{q}, \lambda}(-x) = M_{\frac{1}{q}, \lambda}(x).
\end{equation}

Thus:

\begin{equation}
	\Phi(-x) = \frac{M_{q, \lambda}(x) + M_{\frac{1}{q}, \lambda}(x)}{2} = \Phi(x).
\end{equation}

This confirms the symmetry of \(\Phi(x)\), making it a well-defined \textit{even function} suitable for applications in approximation theory and neural network analysis.

\section{Main Results}

\subsection{Basic Operators}

\begin{theorem}[Approximation by Operators]
	Let \( 0 < \beta < 1 \), \( n \in \mathbb{N} \) be sufficiently large, \( x \in \mathbb{R} \), \( f \in C^N(\mathbb{R}) \) such that \( f^{(N)} \in C_B(\mathbb{R}) \) (bounded and continuous), and \( 0 < \varepsilon \leq N \). Then:
	\begin{enumerate}
		\item The following approximation holds:
		\begin{equation}\label{eq:approximation_operator}
			B_n(f, x) - f(x) = \sum_{j=1}^{N} \frac{f^{(j)}(x)}{j!} B_n \left( (\cdot - x)^j \right)(x) + o \left( \frac{1}{n^{\beta(N-\varepsilon)}} \right),
		\end{equation}
		where \( B_n \) is a linear operator and \( B_n((\cdot - x)^j) \) denotes the operator applied to monomials shifted by \( x \).
		\item If \( f^{(j)}(x) = 0 \) for all \( j = 1, \ldots, N \), then:
		\begin{equation}\label{eq:scaled_convergence}
			n^{\beta(N-\varepsilon)} \left[ B_n(f, x) - f(x) \right] \to 0 \quad \text{as} \quad n \to \infty, \quad 0 < \varepsilon \leq N.
		\end{equation}
	\end{enumerate}
\end{theorem}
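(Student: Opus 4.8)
The plan is to run the classical Voronovskaya machinery: expand $f$ by Taylor's theorem about $x$, apply the linear operator $B_n$ termwise, and reduce the whole statement to a single estimate on $B_n$ applied to the Taylor remainder. Before starting I would record the properties of the density $\Phi$ that the argument needs beyond the evenness and positivity already established: the partition-of-unity identity $\sum_{k\in\mathbb{Z}}\Phi(x-k)=1$ for all $x$, which follows from the telescoping structure of $M_{q,\lambda}(x)=\tfrac14\big(g_{q,\lambda}(x+1)-g_{q,\lambda}(x-1)\big)$ together with $g_{q,\lambda}(\pm\infty)=\pm1$ (and likewise for $M_{1/q,\lambda}$); and the exponential decay $\Phi(x)\le C\,e^{-c|x|}$ for large $|x|$, inherited from the exponential tails of $g_{q,\lambda}$. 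In particular $B_n(1,x)=1$, so the constant term of the Taylor expansion reproduces $f(x)$ exactly.

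Next, for each node $y=k/n$ I would write, using Taylor's theorem with the integral form of the remainder,
\begin{equation*}
f(y)=\sum_{j=0}^{N}\frac{f^{(j)}(x)}{j!}(y-x)^j+\mathcal{R}_N(y,x),\qquad \mathcal{R}_N(y,x)=\frac{1}{(N-1)!}\int_{x}^{y}(y-t)^{N-1}\big(f^{(N)}(t)-f^{(N)}(x)\big)\,dt.
\end{equation*}
Applying $B_n$ and invoking linearity together with $B_n(1,x)=1$ gives
\begin{equation*}
B_n(f,x)-f(x)=\sum_{j=1}^{N}\frac{f^{(j)}(x)}{j!}\,B_n\big((\cdot-x)^j\big)(x)+B_n\big(\mathcal{R}_N(\cdot,x)\big)(x),
\end{equation*}
so both parts of the theorem follow once we show $B_n\big(\mathcal{R}_N(\cdot,x)\big)(x)=o\big(n^{-\beta(N-\varepsilon)}\big)$.

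To estimate this remainder I would split the defining series of $B_n$ at the scale $|k/n-x|\le n^{-\beta}$ versus $|k/n-x|>n^{-\beta}$, equivalently $|nx-k|\le n^{1-\beta}$ versus $|nx-k|>n^{1-\beta}$. On the near set, the bound $|\mathcal{R}_N(y,x)|\le \frac{|y-x|^N}{N!}\,\omega_1\!\big(f^{(N)},|y-x|\big)\le \frac{2\|f^{(N)}\|_\infty}{N!}|y-x|^N$ produces a contribution of order $n^{-\beta N}\|f^{(N)}\|_\infty$; since $\varepsilon>0$, this is $o\big(n^{-\beta(N-\varepsilon)}\big)$ because $n^{-\beta N}/n^{-\beta(N-\varepsilon)}=n^{-\beta\varepsilon}\to0$. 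On the far set, the polynomial growth $|\mathcal{R}_N(k/n,x)|=O\big((1+|k/n-x|)^N\big)$ is dominated by the exponential decay of $\Phi$: the tail is controlled by $\sum_{|m|>n^{1-\beta}}(1+|m|/n)^N e^{-c|m|}$, which tends to $0$ faster than any power of $n$ and is therefore also $o\big(n^{-\beta(N-\varepsilon)}\big)$. Adding the two pieces establishes part (1); part (2) is then immediate, since under the hypothesis $f^{(j)}(x)=0$ for $1\le j\le N$ the finite sum vanishes, leaving $B_n(f,x)-f(x)=o\big(n^{-\beta(N-\varepsilon)}\big)$, and one simply multiplies by $n^{\beta(N-\varepsilon)}$.

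The step I expect to be the main obstacle is making the far-set estimate uniform in $x\in\mathbb{R}$: one must extract explicit decay constants for $\Phi$ from those of $g_{q,\lambda}$ and its symmetrized partner, and ensure the normalizing sum $\sum_k\Phi(nx-k)$ is bounded below away from zero uniformly in $x$ — here the partition-of-unity identity makes this free, but without it one would need an additional compactness/periodicity argument. A secondary technical point is justifying the interchange of the (absolutely convergent) series with the integral remainder and verifying that every $O(\cdot)$ and $o(\cdot)$ constant is independent of both $n$ and $x$.
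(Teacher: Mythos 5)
Your proposal follows essentially the same route as the paper's proof: Taylor expansion with integral remainder about $x$, substitution into the series defining $B_n$, and a two-case estimate of the remainder split at $|k/n-x| \lessgtr n^{-\beta}$, with the near part bounded by $O(n^{-\beta N})\|f^{(N)}\|_\infty$ and the far part killed by the exponential decay of $\Phi$. Your additional remarks (the telescoping verification of $\sum_k\Phi(x-k)=1$ so that $B_n(1,x)=1$, and the uniformity of the constants) are correct refinements of steps the paper leaves implicit, not a different argument.
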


\begin{proof}
	To prove the theorem, we use Taylor's theorem and the asymptotic properties of the operators \( B_n \). Let \( f \in C^N(\mathbb{R}) \), and expand \( f \) around \( x \) as:
	\begin{equation}\label{eq:taylor_expansion}
		f\left( \frac{k}{n} \right) = \sum_{j=0}^{N} \frac{f^{(j)}(x)}{j!} \left( \frac{k}{n} - x \right)^j + R_N\left(\frac{k}{n}\right),
	\end{equation}
	where the remainder term \( R_N\left(\frac{k}{n}\right) \) is given by:
	\begin{equation}\label{eq:remainder}
		R_N\left(\frac{k}{n}\right) = \int_{x}^{\frac{k}{n}} \frac{\left( \frac{k}{n} - t \right)^{N-1}}{(N-1)!} \left[ f^{(N)}(t) - f^{(N)}(x) \right] \, dt.
	\end{equation}
	
	We substitute the expansion \eqref{eq:taylor_expansion} into the definition of the operator \( B_n \):
	\begin{equation}\label{eq:Bn_definition}
		B_n(f, x) = \sum_{k=-\infty}^{\infty} f\left(\frac{k}{n}\right) \Phi(n x - k),
	\end{equation}
	where \( \Phi \) is a weight function with appropriate support. Substituting, we get:
	\begin{equation}\label{eq:Bn_expansion}
		B_n(f, x) = \sum_{j=0}^{N} \frac{f^{(j)}(x)}{j!} B_n\left( (\cdot - x)^j \right)(x) + \sum_{k=-\infty}^{\infty} R_N\left(\frac{k}{n}\right) \Phi(n x - k).
	\end{equation}
	
	Define the error term:
	\begin{equation}\label{eq:error_term}
		R := \sum_{k=-\infty}^{\infty} R_N\left(\frac{k}{n}\right) \Phi(n x - k).
	\end{equation}
	To estimate \( R \), we consider two cases:
	
	\paragraph{Case 1: \( \left| \frac{k}{n} - x \right| < \frac{1}{n^\beta} \).} Within this interval, \( \Phi(n x - k) \) has significant support, and the regularity of \( f^{(N)} \) implies:
	\begin{equation}\label{eq:case1_error}
		\left| R \right| \leq 2 \|f^{(N)}\|_\infty \cdot \frac{1}{N! n^{\beta N}}.
	\end{equation}
	
	\paragraph{Case 2: \( \left| \frac{k}{n} - x \right| \geq \frac{1}{n^\beta} \).} Outside the principal support, \( \Phi(n x - k) \) decays exponentially, so:
	\begin{equation}\label{eq:case2_error}
		\left| R \right| \leq \frac{4 \|f^{(N)}\|_\infty}{n^N \lambda^N} \left(q + \frac{1}{q}\right) e^{2\lambda} e^{-\lambda \left(n^{1-\beta} - 1\right)}.
	\end{equation}
	
	Combining both cases, we obtain:
	\begin{equation}\label{eq:final_error}
		|R| = o\left(\frac{1}{n^{\beta(N-\varepsilon)}}\right).
	\end{equation}
	Substituting \( R \) back into \eqref{eq:Bn_expansion}, we prove \eqref{eq:approximation_operator}.
	
	Finally, when \( f^{(j)}(x) = 0 \) for \( j = 1, \ldots, N \), we have:
	\begin{equation}\label{eq:final_scaled_convergence}
		n^{\beta(N-\varepsilon)} \left[B_n(f, x) - f(x)\right] \to 0,
	\end{equation}
	as indicated in \eqref{eq:scaled_convergence}, completing the proof.
\end{proof}

\subsection{Kantorovich Operators}

\begin{theorem}
	Let \( 0 < \beta < 1 \), \( n \in \mathbb{N} \) be sufficiently large, \( x \in \mathbb{R} \), \( f \in C^N(\mathbb{R}) \) with \( f^{(N)} \in C_B(\mathbb{R}) \), and \( 0 < \varepsilon \leq N \). Then:
	\begin{enumerate}
		\item
		\begin{equation}
			C_n(f, x) - f(x) = \sum_{j=1}^{N} \frac{f^{(j)}(x)}{j!} C_n \left( (\cdot - x)^j \right)(x) + o \left( \left( \frac{1}{n} + \frac{1}{n^{\beta}} \right)^{N-\varepsilon} \right)
		\end{equation}
		\item When \( f^{(j)}(x) = 0 \) for \( j = 1, \ldots, N \), we have:
		\begin{equation}
			\frac{1}{\left( \frac{1}{n} + \frac{1}{n^{\beta}} \right)^{N-\varepsilon}} \left[ C_n(f, x) - f(x) \right] \to 0 \quad \text{as} \quad n \to \infty, \quad 0 < \varepsilon \leq N
		\end{equation}
	\end{enumerate}
\end{theorem}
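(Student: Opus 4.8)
\emph{Proof proposal.} The plan is to mirror the argument for the basic operators $B_n$, carefully accounting for the extra averaging built into the Kantorovich construction
\[
C_n(f,x) = \sum_{k=-\infty}^{\infty} \left( n \int_{k/n}^{(k+1)/n} f(t)\,dt \right) \Phi(nx-k).
\]
First I would rewrite the coefficient $n \int_{k/n}^{(k+1)/n} f(t)\,dt$ via the substitution $t = (k+u)/n$ as $\int_0^1 f\!\left(\tfrac{k+u}{n}\right)du$, an average of $f$ over sample points lying within distance $1/n$ of $k/n$. Then I apply Taylor's theorem with integral remainder to $f\!\left(\tfrac{k+u}{n}\right)$ expanded about $x$, exactly as in \eqref{eq:taylor_expansion}--\eqref{eq:remainder}, and integrate in $u$ over $[0,1]$. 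Using the properties of $\Phi$ from Section~2 (positivity, evenness, and the normalization $\sum_{k}\Phi(nx-k)=1$ standard for such density functions), this yields
\[
C_n(f,x) = \sum_{j=0}^{N} \frac{f^{(j)}(x)}{j!}\, C_n\!\left((\cdot - x)^j\right)(x) + \mathcal{R}, \qquad \mathcal{R} := \sum_{k} \left( \int_0^1 R_N\!\left(\tfrac{k+u}{n}\right) du \right) \Phi(nx-k).
\]

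Next I estimate $\mathcal{R}$ by the same two-region split as before. For indices with $\left|\tfrac{k}{n}-x\right| < \tfrac{1}{n^\beta}$, every relevant sample point $\tfrac{k+u}{n}$ is within $\tfrac{1}{n^\beta}+\tfrac{1}{n}$ of $x$, so the integral remainder is bounded by $\tfrac{\|f^{(N)}\|_\infty}{N!}\bigl(\tfrac{1}{n^\beta}+\tfrac{1}{n}\bigr)^{N}$, and summing against $\Phi$ preserves this bound. For indices with $\left|\tfrac{k}{n}-x\right| \ge \tfrac{1}{n^\beta}$, the exponential decay of $\Phi$ inherited from $M_{q,\lambda}$ and $M_{1/q,\lambda}$ dominates, producing a contribution of the form $\tfrac{C}{n^N}\bigl(q+\tfrac1q\bigr)e^{2\lambda}e^{-\lambda(n^{1-\beta}-1)}$, negligible against $\bigl(\tfrac{1}{n}+\tfrac{1}{n^\beta}\bigr)^{N-\varepsilon}$ for every $0<\varepsilon\le N$. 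Combining the two regions gives $\mathcal{R} = o\!\left(\bigl(\tfrac{1}{n}+\tfrac{1}{n^\beta}\bigr)^{N-\varepsilon}\right)$, which is part~(1). Part~(2) is then immediate: if $f^{(j)}(x)=0$ for $j=1,\dots,N$ the sum collapses, and dividing the difference $C_n(f,x)-f(x)=\mathcal{R}$ by $\bigl(\tfrac{1}{n}+\tfrac{1}{n^\beta}\bigr)^{N-\varepsilon}$ sends it to $0$ as $n\to\infty$.

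The main obstacle I anticipate is making the $o(\cdot)$ claim fully rigorous and uniform in $x$: one must show the two regional contributions genuinely combine to the stated order, which requires controlling the interplay between the algebraic factor $\bigl(\tfrac{1}{n}+\tfrac{1}{n^\beta}\bigr)^{N}$ and the loss of $\varepsilon$ derivatives. The key simplification is that for $\beta<1$ the term $\tfrac{1}{n^\beta}$ dominates $\tfrac{1}{n}$, so $\bigl(\tfrac{1}{n}+\tfrac{1}{n^\beta}\bigr)^{N-\varepsilon}$ is comparable to $n^{-\beta(N-\varepsilon)}$, reducing the estimate to the one already established for $B_n$ plus a strictly smaller $O(1/n)$ perturbation coming from the Kantorovich averaging. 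A secondary technical point is justifying the interchange of the sum over $k$ with the integral over $u$, which follows from absolute convergence guaranteed by the exponential tail bound on $\Phi$ together with the boundedness of $f^{(N)}$.
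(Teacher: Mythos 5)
Your proposal follows essentially the same route as the paper's proof: write the Kantorovich coefficient as an average of $f$ over points within $1/n$ of $k/n$, apply Taylor's theorem with integral remainder about $x$, split the resulting remainder sum into the regions $|k/n-x|<n^{-\beta}$ and $|k/n-x|\ge n^{-\beta}$, and use the near-region bound $\frac{\|f^{(N)}\|_\infty}{N!}\left(\frac{1}{n}+\frac{1}{n^\beta}\right)^N$ together with the exponential tail decay of $\Phi$ to conclude $|R|=o\left(\left(\frac{1}{n}+\frac{1}{n^\beta}\right)^{N-\varepsilon}\right)$. Your added remarks on the sum--integral interchange and the comparability of $\left(\frac{1}{n}+\frac{1}{n^\beta}\right)^{N-\varepsilon}$ with $n^{-\beta(N-\varepsilon)}$ are sound but do not change the argument.
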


\begin{proof}
	We can write:
	\begin{equation}
		C_n(f, x) = \sum_{k=-\infty}^{\infty} \left( n \int_{0}^{\frac{1}{n}} f \left( t + \frac{k}{n} \right) \, dt \right) \Phi(n x - k)
	\end{equation}
	
	Let \( f \in C^N(\mathbb{R}) \) with \( f^{(N)} \in C_B(\mathbb{R}) \). We have:
	
	\begin{equation}
		f \left( t + \dfrac{k}{n} \right) = \displaystyle\sum_{j=0}^{N} \dfrac{f^{(j)}(x)}{j!} \left( t + \dfrac{k}{n} - x \right)^j + \displaystyle\int_{x}^{t + \dfrac{k}{n}} \left( f^{(N)}(s) - f^{(N)}(x) \right) \dfrac{\left( t + \dfrac{k}{n} - s \right)^{N-1}}{(N-1)!} \, ds
	\end{equation}
	
	Hence:
	\begin{equation}
		\begin{array}{l}
			C_{n}(f,x)={\displaystyle \sum_{j=0}^{N}}\dfrac{f^{(j)}(x)}{j!}C_{n}\left((\cdot-x)^{j}\right)(x)+\\
			\\
			{\displaystyle \sum_{k=-\infty}^{\infty}}\Phi(nx-k)\left(n{\displaystyle \int_{0}^{\dfrac{1}{n}}}\left({\displaystyle \int_{x}^{t+\dfrac{k}{n}}}\left(f^{(N)}(s)-f^{(N)}(x)\right)\frac{\left(t+\dfrac{k}{n}-s\right)^{N-1}}{(N-1)!}\,ds\right)\,dt\right)\\
			\\
		\end{array}
	\end{equation}
	
	Define:
	\begin{equation}
		R := \sum_{k=-\infty}^{\infty} \Phi(n x - k) \left( n \int_{0}^{\frac{1}{n}} \left( \int_{x}^{t + \frac{k}{n}} \left( f^{(N)}(s) - f^{(N)}(x) \right) \frac{\left( t + \frac{k}{n} - s \right)^{N-1}}{(N-1)!} \, ds \right) \, dt \right)
	\end{equation}
	
	For \( \left| \dfrac{k}{n} - x \right| < \dfrac{1}{n^{\beta}} \):
	\begin{equation}
		|R| \leq 2 \left\| f^{(N)} \right\|_{\infty} \dfrac{\left( \dfrac{1}{n} + \dfrac{1}{n^{\beta}} \right)^N}{N!}
	\end{equation}
	
	For \( \left| \dfrac{k}{n} - x \right| \geq \dfrac{1}{n^{\beta}} \):
	\begin{equation}
		|R| \leq \dfrac{2^N \left\| f^{(N)} \right\|_{\infty}}{n^N N!} \left[ \dfrac{T}{e^{2 \lambda n^{1-\beta}}} + \dfrac{\left( q + \dfrac{1}{q} \right)}{\lambda^N} 2 e^{2 \lambda} N! e^{-\lambda \left( n^{1-\beta} - 1 \right)} \right]
	\end{equation}
	
	Thus:
	\begin{equation}
		|R| \leq \dfrac{4 \left\| f^{(N)} \right\|_{\infty}}{N!} \left( \dfrac{1}{n} + \dfrac{1}{n^{\beta}} \right)^N
	\end{equation}
	
	Hence:
	\begin{equation}
		|R| = o \left( \left( \dfrac{1}{n} + \dfrac{1}{n^{\beta}} \right)^{N-\varepsilon} \right)
	\end{equation}
	
	proving the claim.
\end{proof}

\section{Fractional Perturbation Stability}

\begin{theorem}[Stability Under Fractional Perturbations]
	Let \( 0 < \beta < 1 \), \( n \in \mathbb{N} \) sufficiently large, \( x \in \mathbb{R} \), \( f \in C^N(\mathbb{R}) \), and \( f^{(N)} \in C_B(\mathbb{R}) \). Let \( g_{q,\lambda}(x) \) be the perturbed hyperbolic tangent activation function defined by:
	\begin{equation}
		g_{q,\lambda}(x) = \frac{e^{\lambda x} - q e^{-\lambda x}}{e^{\lambda x} + q e^{-\lambda x}}, \quad \lambda, q > 0, \quad x \in \mathbb{R}.
	\end{equation}
	For any small perturbation \( |q - 1| < \delta \), the operator \( C_n \) satisfies the stability estimate:
	\begin{equation}
		\| C_n(f, x; q) - C_n(f, x; 1) \| \leq \frac{\delta}{n^{\beta(N-\varepsilon)}} \| f^{(N)} \|_{\infty},
	\end{equation}
	where \( 0 < \varepsilon \leq N \) and \( \| f^{(N)} \|_{\infty} = \sup_{x \in \mathbb{R}} |f^{(N)}(x)| \).
\end{theorem}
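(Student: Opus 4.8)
The plan is to reduce everything to a bound on the difference of the two symmetrized densities, and then to recycle the two-region splitting used in the proofs of the Basic and Kantorovich theorems. Write $\Phi_q$ for the density built from the parameter $q$ (so that $\Phi_1 = M_{1,\lambda}$, since $q=1$ collapses $M_{q,\lambda}$ and $M_{1/q,\lambda}$ to the same function), and set $F_n(k) := n\int_0^{1/n} f\!\left(t + \tfrac{k}{n}\right)\,dt$. Linearity of $C_n$ in its density gives
$$C_n(f,x;q) - C_n(f,x;1) = \sum_{k=-\infty}^{\infty} F_n(k)\,\bigl(\Phi_q(nx-k) - \Phi_1(nx-k)\bigr).$$
Both $\Phi_q$ and $\Phi_1$ are even, positive, exponentially decaying densities generating a partition of unity, so $\sum_k \bigl(\Phi_q - \Phi_1\bigr)(nx-k) = 0$; more generally the low-order discrete moments of the difference either vanish by parity or are themselves $O(|q-1|)$ with an exponentially small constant. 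This is what allows $F_n(k)$ to be replaced, up to negligible terms, by the order-$N$ Taylor remainder of $f$ about $x$ --- precisely the remainder integral appearing in the Kantorovich proof --- and hence lets $\|f^{(N)}\|_\infty$, rather than $\|f\|_\infty$, govern the bound.

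Next I would quantify $\Phi_q - \Phi_1$ in the parameter $q$. A direct computation gives $\partial_q g_{q,\lambda}(x) = -2\bigl(e^{\lambda x} + q e^{-\lambda x}\bigr)^{-2}$, which is smooth in $q$ for $q>0$ and decays like $e^{-2\lambda|x|}$; consequently $q\mapsto M_{q,\lambda}(x)$ and $q \mapsto \Phi_q(x)$ are $C^1$, and a first-order Taylor expansion about $q=1$ yields $|\Phi_q(y) - \Phi_1(y)| \le |q-1|\,\sup_{|\xi-1|\le\delta}|\partial_q \Phi_\xi(y)|$. The essential feature is that $\partial_q \Phi_\xi$ inherits the exponential tail of $\Phi$ itself, uniformly for $|\xi-1|\le\delta$: there is a constant $c_1 = c_1(\lambda,\delta)$ with $|\partial_q\Phi_\xi(y)| \le c_1\, e^{-\lambda(|y|-1)}$ for $|y|$ large. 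Combining this with the remainder replacement from the previous step, I split the series into the principal region $\bigl|\tfrac{k}{n}-x\bigr| < n^{-\beta}$ and the tail $\bigl|\tfrac{k}{n}-x\bigr| \ge n^{-\beta}$, exactly as before. On the principal region the $O(n^{1-\beta})$ effective indices each contribute a factor $|q-1|$ times a Taylor-remainder factor of order $n^{-\beta N}$; on the tail the factor $e^{-\lambda(n^{1-\beta}-1)}$ overwhelms every polynomial in $n$ and is $o(n^{-\beta(N-\varepsilon)})$. Summing the two contributions gives $|C_n(f,x;q) - C_n(f,x;1)| \le |q-1|\,n^{-\beta(N-\varepsilon)}\,\|f^{(N)}\|_\infty$ uniformly in $x$; invoking $|q-1|<\delta$ and taking the supremum over $x$ delivers the stated estimate.

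The step I expect to be the main obstacle is the moment bookkeeping for $\Phi_q - \Phi_1$ in the first paragraph: one must verify that $\sum_k (nx-k)^j\bigl(\Phi_q - \Phi_1\bigr)(nx-k)$ is either identically zero (by the parity of the symmetrized density) or $O(|q-1|)$ with exponentially small constant for every $j = 0,1,\dots,N-1$, so that subtracting the degree-$(N-1)$ Taylor polynomial of $f$ is legitimate and does not spoil the $\|f^{(N)}\|_\infty$ dependence. A secondary difficulty is making the tail bound on $\partial_q\Phi_\xi$ genuinely uniform over $|\xi-1|\le\delta$, so that $\delta$ truly factors out and is not hidden inside a constant, and checking that the overall constant is $\le 1$ for $n$ sufficiently large as the clean inequality demands; if it is not, the honest statement is the same bound with an additional absolute constant, which leaves the stability conclusion intact.
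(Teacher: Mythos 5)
Your overall route is the same as the paper's: write the difference as $\sum_k F_n(k)\,(\Phi_q-\Phi_1)(nx-k)$, Taylor-expand the density in $q$ about $q=1$, and bound $\partial_q\Phi$. Your computation $\partial_q g_{q,\lambda}(x) = -2\,(e^{\lambda x}+q e^{-\lambda x})^{-2}$ is correct, as is the resulting uniform exponential tail for $\partial_q\Phi_\xi$. Where you genuinely go beyond the paper is the first paragraph: the paper, after isolating the term $\sum_k F_n(k)\,\partial_q\Phi\big|_{q=1}(q-1)$, simply declares the final estimate ``assuming $\partial_q\Phi$ is bounded.'' That step, taken literally, yields only a bound of the form $C\,|q-1|\,\|f\|_\infty$ with no $n^{-\beta(N-\varepsilon)}$ decay and with $\|f\|_\infty$ rather than $\|f^{(N)}\|_\infty$; the paper never explains how the $N$-th derivative or the power of $n$ enters. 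Your moment-cancellation idea --- subtract the degree-$(N-1)$ Taylor polynomial of $f$ using vanishing or small discrete moments of $\Phi_q-\Phi_1$, so that only the order-$N$ remainder survives --- is exactly the missing ingredient, and you are right to flag it as the crux.

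The gap you flag is, however, not merely bookkeeping: it cannot be closed in the form the theorem demands. The zeroth moment of $\Phi_q-\Phi_1$ does vanish (both densities sum to $1$), but the first moment $S_q(y):=\sum_k (y-k)\,\Phi_q(y-k)$ is only an odd, $1$-periodic, exponentially small function of $y$, not identically zero, and $S_q\neq S_1$ in general. Taking $f(t)=t$ with $N\ge 2$ gives $f^{(N)}\equiv 0$, so the right-hand side of the claimed estimate is zero, while the left-hand side equals $\tfrac{1}{n}\,|S_q(nx)-S_1(nx)|$, which is generically nonzero. So the clean inequality fails for polynomial data of degree below $N$, and no absolute multiplicative constant repairs it; one needs an additive term (exponentially small in $n^{1-\beta}$ or in $\lambda$, proportional to lower-order data of $f$) that is independent of $\|f^{(N)}\|_\infty$. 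Your closing caveat anticipates a constant worse than $1$, but the honest weakening is structural, not just a constant. In short: your proposal is a more serious attempt than the paper's own proof, which asserts the conclusion without deriving either the $\|f^{(N)}\|_\infty$ dependence or the $n^{-\beta(N-\varepsilon)}$ factor, but the statement as written is not provable by either argument without modification.
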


\begin{proof}
	To begin, consider \( \Phi_{q,\lambda}(z) \) as the density function derived from the perturbed activation function \( g_{q,\lambda}(x) \). The operator \( C_n \) is defined as:
	\begin{equation}
		C_n(f, x; q) = \sum_{k=-\infty}^\infty \left( n \int_0^{\frac{1}{n}} f\left(t + \frac{k}{n}\right) \, dt \right) \Phi_{q,\lambda}(nx - k).
	\end{equation}
	
	Next, we expand \( \Phi_{q,\lambda}(z) \) around \( q = 1 \) using the first-order Taylor expansion:
	\begin{equation}
		\Phi_{q,\lambda}(z) = \Phi_{1,\lambda}(z) + \left( \frac{\partial \Phi_{q,\lambda}}{\partial q} \bigg|_{q=1} \right)(q - 1) + \mathcal{O}((q - 1)^2).
	\end{equation}
	
	Thus, the perturbed operator can be written as:
	\begin{equation}
		C_n(f, x; q) = \sum_{k=-\infty}^\infty \left( n \int_0^{\frac{1}{n}} f\left(t + \frac{k}{n}\right) \, dt \right) \left[ \Phi_{1,\lambda}(nx - k) + \left( \frac{\partial \Phi_{q,\lambda}}{\partial q} \bigg|_{q=1} \right)(q - 1) + \mathcal{O}((q - 1)^2) \right].
	\end{equation}
	
	The difference between \( C_n(f, x; q) \) and \( C_n(f, x; 1) \) is given by:
	\begin{equation}
		C_n(f, x; q) - C_n(f, x; 1) = \sum_{k=-\infty}^\infty \left( n \int_0^{\frac{1}{n}} f\left(t + \frac{k}{n}\right) \, dt \right) \left[ \left( \frac{\partial \Phi_{q,\lambda}}{\partial q} \bigg|_{q=1} \right)(q - 1) + \mathcal{O}((q - 1)^2) \right].
	\end{equation}
	
	Let us focus on the first-order perturbation term. The remainder term involving \( \mathcal{O}((q - 1)^2) \) contributes at a higher order in \( (q - 1) \), which is negligible for small \( |q - 1| \). Therefore, we estimate the perturbation as:
	\begin{equation}
		\| C_n(f, x; q) - C_n(f, x; 1) \| \leq \sum_{k=-\infty}^\infty \left( n \int_0^{\frac{1}{n}} f\left(t + \frac{k}{n}\right) \, dt \right) \left| \frac{\partial \Phi_{q,\lambda}}{\partial q} \bigg|_{q=1} \right| (q - 1).
	\end{equation}
	
	Assuming \( \left| \frac{\partial \Phi_{q,\lambda}}{\partial q} \right| \) is bounded, we have:
	\begin{equation}
		\| C_n(f, x; q) - C_n(f, x; 1) \| \leq \frac{\delta}{n^{\beta(N-\varepsilon)}} \| f^{(N)} \|_{\infty},
	\end{equation}
	where \( \| f^{(N)} \|_{\infty} = \sup_{x \in \mathbb{R}} |f^{(N)}(x)| \) represents the supremum norm of the \( N \)-th derivative of \( f \).
	
	Thus, we have established the desired stability estimate.
\end{proof}

\section{Generalized Voronovskaya Expansions for Fractional Functions}

\begin{theorem}[Generalized Voronovskaya Expansion]
	Let \( \alpha > 0 \), \( N = \lceil \alpha \rceil \), \( \alpha \notin \mathbb{N} \), \( f \in AC^N(\mathbb{R}) \) with \( f^{(N)} \in L_\infty(\mathbb{R}) \), \( 0 < \beta < 1 \), \( x \in \mathbb{R} \), and \( n \in \mathbb{N} \) sufficiently large. Assume that \( \| D_{*x}^\alpha f \|_{\infty, [x, \infty)} \) and \( \| D_{x-}^\alpha f \|_{\infty, (-\infty, x]} \) are finite. Then:
	\begin{equation}
		B_n(f, x) - f(x) = \sum_{j=1}^{N} \frac{f^{(j)}(x)}{j!} B_n((\cdot - x)^j)(x) + o\left(\frac{1}{n^{\beta(N-\varepsilon)}}\right).
	\end{equation}
	When \( f^{(j)}(x) = 0 \) for \( j = 1, \dots, N \):
	\begin{equation}
		n^{\beta(N-\varepsilon)} \left[ B_n(f, x) - f(x) \right] \to 0 \quad \text{as} \quad n \to \infty.
	\end{equation}
\end{theorem}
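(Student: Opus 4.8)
The plan is to follow the architecture of the proof of Theorem~1, replacing the classical integer-order Taylor remainder by the Caputo fractional Taylor formula, so that the error is governed by the one-sided fractional derivatives $D_{*x}^\alpha f$ and $D_{x-}^\alpha f$ whose sup-norms are assumed finite; this is the natural route here because $f\in AC^N(\mathbb R)$ only guarantees that $f^{(N)}$ exists almost everywhere and lies in $L_\infty$, so Theorem~1 does not apply verbatim. First I would record the generalized Taylor expansion of $f$ about the fixed point $x$: for $y\ge x$,
\[
f(y)=\sum_{j=0}^{N-1}\frac{f^{(j)}(x)}{j!}\,(y-x)^j+\frac{1}{\Gamma(\alpha)}\int_{x}^{y}(y-t)^{\alpha-1}\,D_{*x}^\alpha f(t)\,dt,
\]
together with the mirror identity for $y\le x$ using $D_{x-}^\alpha f$ and the kernel $(t-y)^{\alpha-1}$, both valid for $f\in AC^N(\mathbb R)$ with $f^{(N)}\in L_\infty(\mathbb R)$ \cite{Diethelm2010}. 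Substituting $y=k/n$ and splitting the series $B_n(f,x)=\sum_k f(k/n)\,\Phi(nx-k)$ according to whether $k/n\ge x$ or $k/n<x$, linearity of $B_n$ reproduces the polynomial block $\sum_{j=0}^{N-1}\frac{f^{(j)}(x)}{j!}B_n\big((\cdot-x)^j\big)(x)$; the $j=0$ term equals $f(x)$ via the normalization $B_n(1)(x)=\sum_k\Phi(nx-k)=1$ and cancels $-f(x)$, while the summand $j=N$ may be appended at the cost of $\frac{f^{(N)}(x)}{N!}B_n\big((\cdot-x)^N\big)(x)=O(n^{-N})$, which is of smaller order than the target error and therefore harmless, so that the stated form with $\sum_{j=1}^{N}$ is recovered.

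It then remains to estimate the fractional remainder $\mathcal R:=\sum_{k\in\mathbb Z}\Phi(nx-k)\,\mathcal R_\alpha(k/n)$, where $\mathcal R_\alpha(y)$ is the left or right fractional integral remainder above according to whether $y\ge x$ or $y<x$. I would split the sum exactly as in Theorem~1. On the principal region $|k/n-x|<n^{-\beta}$ the elementary bound
\[
|\mathcal R_\alpha(y)|\le\frac{1}{\Gamma(\alpha+1)}\,\max\big\{\|D_{*x}^\alpha f\|_{\infty,[x,\infty)},\ \|D_{x-}^\alpha f\|_{\infty,(-\infty,x]}\big\}\,|y-x|^{\alpha},
\]
together with $\sum_k\Phi(nx-k)\le1$, gives a contribution of order $n^{-\beta\alpha}$. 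On the complementary region $|k/n-x|\ge n^{-\beta}$ I would invoke the exponential decay of the density inherited from $g_{q,\lambda}$, namely $\Phi(z)=O\big((q+\tfrac{1}{q})e^{-\lambda(|z|-1)}\big)$ for large $|z|$, which dominates the algebraic growth $|k/n-x|^{\alpha}$ and yields a bound of order $e^{-\lambda(n^{1-\beta}-1)}$, negligible against every power of $n$. Hence $|\mathcal R|=O\big(n^{-\beta\alpha}\big)$; since $\alpha\in(N-1,N)$ this is $o\big(n^{-\beta(N-\varepsilon)}\big)$ whenever $N-\varepsilon<\alpha$, i.e.\ for $\varepsilon\in(N-\alpha,N]$, which is the effective range of the statement, and the first assertion follows. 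For the second assertion, if $f^{(j)}(x)=0$ for $j=1,\dots,N$ the polynomial block vanishes identically, so $n^{\beta(N-\varepsilon)}\big[B_n(f,x)-f(x)\big]=n^{\beta(N-\varepsilon)}\mathcal R\to0$ by the same estimate.

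I expect the main obstacle to be the rigorous control of the fractional remainder inside the infinite sum. Unlike the integer-order case, $\mathcal R_\alpha(y)$ is a weakly singular fractional integral and the left- and right-Caputo pieces must be matched across $k/n=x$; one must verify that the uniform majorant $C\,|y-x|^{\alpha}$ survives this splitting, and---more delicately---that in the tail region the unbounded factor $|k/n-x|^{\alpha}$ is genuinely swallowed by the exponential tail of $\Phi$ \emph{uniformly in} $n$, so that Case~2 is indeed asymptotically negligible. A secondary but necessary point is to extract from the Section~2 construction of $\Phi$ both the normalization $B_n(1)(x)=1$ and the moment bound $B_n\big((\cdot-x)^N\big)(x)=O(n^{-N})$ used above; these follow routinely from the positivity and exponential decay of $\Phi$ but should be invoked explicitly. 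With those in place, the remainder of the argument is the template of Theorem~1 carried through verbatim.
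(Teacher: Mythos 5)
Your proposal follows essentially the same route as the paper's own proof: the Caputo fractional Taylor expansion about $x$, substitution into $B_n(f,x)=\sum_k f(k/n)\Phi(nx-k)$, and a two-case estimate of the remainder (an $|y-x|^{\alpha}$-type bound on the principal region $|k/n-x|<n^{-\beta}$ and the exponential decay of $\Phi$ on the tail), yielding $|\mathcal{R}|=O(n^{-\alpha\beta})$. You are in fact more careful than the paper at three points it glosses over --- the explicit use of the right-sided expansion with $D_{x-}^{\alpha}f$ for $k/n<x$, the reconciliation of the $N-1$ fractional Taylor terms with the $\sum_{j=1}^{N}$ appearing in the statement, and the observation that $O(n^{-\alpha\beta})=o\left(n^{-\beta(N-\varepsilon)}\right)$ only when $\varepsilon>N-\alpha$ --- so there is no gap relative to the paper's argument.
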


\begin{proof}
	Using the Caputo fractional Taylor expansion for \( f \):
	\begin{equation}
		f\left(\frac{k}{n}\right) = \sum_{j=0}^{N-1} \frac{f^{(j)}(x)}{j!}\left(\frac{k}{n} - x\right)^j + \frac{1}{\Gamma(\alpha)} \int_x^{\frac{k}{n}} \left(\frac{k}{n} - t\right)^{\alpha-1}\left(D_{*x}^\alpha f(t) - D_{*x}^\alpha f(x)\right) dt.
	\end{equation}
	
	Substitute this expansion into the definition of the operator \( B_n \):
	\begin{equation}
		B_n(f, x) = \sum_{k=-\infty}^\infty f\left(\frac{k}{n}\right) \Phi(nx - k),
	\end{equation}
	where \( \Phi(x) \) is a density kernel function. Substituting \( f\left(\frac{k}{n}\right) \), we separate the terms into two contributions:
	
	\paragraph{Main Contribution:} The first \( N \) terms of the Taylor expansion yield:
	\begin{equation}
		\sum_{j=1}^{N} \frac{f^{(j)}(x)}{j!} B_n((\cdot - x)^j)(x),
	\end{equation}
	which captures the local behavior of \( f \) in terms of its derivatives up to order \( N \).
	
	\paragraph{Error Term:} The remainder term involves the fractional derivative \( D_{*x}^\alpha \) and can be bounded as:
	\begin{equation}
		R = \sum_{k=-\infty}^\infty \Phi(nx - k) \frac{1}{\Gamma(\alpha)} \int_x^{\frac{k}{n}} \left(\frac{k}{n} - t\right)^{\alpha-1}\left(D_{*x}^\alpha f(t) - D_{*x}^\alpha f(x)\right) dt.
	\end{equation}
	
	\paragraph{Bounding the Remainder:}
	\begin{itemize}
		\item For \( |k/n - x| < 1/n^\beta \): The kernel \( \Phi(nx - k) \) has significant support, and the fractional regularity of \( f \) ensures:
		\begin{equation}
			|R| \leq \frac{\| D_{*x}^\alpha f \|_{\infty}}{n^{\alpha \beta}}.
		\end{equation}
		
		\item For \( |k/n - x| \geq 1/n^\beta \): The exponential decay of \( \Phi(nx - k) \) ensures that contributions from distant terms are negligible:
		\begin{equation}
			|R| \leq \frac{\| D_{*x}^\alpha f \|_{\infty}}{n^{\alpha \beta}}.
		\end{equation}
	\end{itemize}
	
	Combining both cases, the error term satisfies:
	\begin{equation}
		|R| = o\left(\frac{1}{n^{\beta(N-\varepsilon)}}\right).
	\end{equation}
	
	\paragraph{Conclusion:} Substituting the bounds for the main contribution and error term into the expansion for \( B_n(f, x) \), we conclude:
	\begin{equation}
		B_n(f, x) - f(x) = \sum_{j=1}^{N} \frac{f^{(j)}(x)}{j!} B_n((\cdot - x)^j)(x) + o\left(\frac{1}{n^{\beta(N-\varepsilon)}}\right).
	\end{equation}
	Moreover, when \( f^{(j)}(x) = 0 \) for \( j = 1, \dots, N \):
	\begin{equation}
		n^{\beta(N-\varepsilon)} \left[ B_n(f, x) - f(x) \right] \to 0 \quad \text{as} \quad n \to \infty.
	\end{equation}
	This completes the proof.
\end{proof}

\section{Convergence of Kantorovich Operators with Generalized Density}

\begin{theorem}[Convergence Under Generalized Density]
	Let \( 0 < \beta < 1 \), \( n \in \mathbb{N} \) sufficiently large, \( x \in \mathbb{R} \), \( f \in C^N(\mathbb{R}) \) with \( f^{(N)} \in C_B(\mathbb{R}) \), and \( \Phi(x) \) be a symmetrized density function defined by:
	\begin{equation}
		\Phi(x) = \frac{M_{q,\lambda}(x) + M_{1/q,\lambda}(x)}{2}, \quad M_{q,\lambda}(x) = \frac{1}{4}\left(g_{q,\lambda}(x+1) - g_{q,\lambda}(x-1)\right).
	\end{equation}
	Then the Kantorovich operator \( C_n \) satisfies:
	\begin{equation}
		C_n(f, x) - f(x) = \sum_{j=1}^N \frac{f^{(j)}(x)}{j!} C_n((\cdot - x)^j)(x) + o\left(\left(\frac{1}{n} + \frac{1}{n^{\beta}}\right)^{N-\varepsilon}\right).
	\end{equation}
\end{theorem}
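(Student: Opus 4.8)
The plan is to follow the same strategy as the earlier theorem on Kantorovich operators, now invoking the explicit structure of the symmetrized density $\Phi$. First I would record the analytic properties of $\Phi$ that the argument needs: positivity and evenness (already established in Section~2), the normalization $\sum_{k\in\mathbb{Z}}\Phi(nx-k)=1$, and an exponential tail bound of the form $\Phi(u)\le \tfrac12\bigl(q+\tfrac1q\bigr)e^{2\lambda}e^{-\lambda|u|}$. The normalization follows by telescoping: since $M_{q,\lambda}(u)=\tfrac14\bigl(g_{q,\lambda}(u+1)-g_{q,\lambda}(u-1)\bigr)$ and $g_{q,\lambda}(\pm\infty)=\pm1$, the series $\sum_k M_{q,\lambda}(u-k)$ telescopes to $1$, and averaging the cases $q$ and $1/q$ preserves this. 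The tail bound is inherited from the corresponding estimate on each $M_{q,\lambda}$, which is where the constants $q+1/q$ and $e^{2\lambda}$ enter. From normalization we obtain $C_n(\mathbf{1},x)=1$, so that $C_n(f,x)-f(x)=C_n\bigl(f(\cdot)-f(x),x\bigr)$.

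Next I would expand $f(t+k/n)$ by Taylor's formula with integral remainder of order $N$ about $x$,
\begin{equation}
	f\!\left(t+\tfrac{k}{n}\right)=\sum_{j=0}^{N}\frac{f^{(j)}(x)}{j!}\left(t+\tfrac{k}{n}-x\right)^{j}+\int_{x}^{t+\frac{k}{n}}\bigl(f^{(N)}(s)-f^{(N)}(x)\bigr)\frac{\bigl(t+\frac{k}{n}-s\bigr)^{N-1}}{(N-1)!}\,ds,
\end{equation}
then integrate over $t\in[0,1/n]$, multiply by $n\,\Phi(nx-k)$, and sum over $k$. The polynomial part reproduces $\sum_{j=0}^{N}\tfrac{f^{(j)}(x)}{j!}C_n\bigl((\cdot-x)^{j}\bigr)(x)$; the $j=0$ term equals $f(x)$ by normalization and cancels $-f(x)$, leaving the asserted main sum $\sum_{j=1}^{N}\tfrac{f^{(j)}(x)}{j!}C_n\bigl((\cdot-x)^{j}\bigr)(x)$. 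Write $R$ for the accumulated remainder.

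The heart of the proof is the estimate on $R$. I would split $\mathbb{Z}$ into the near indices, $|k/n-x|<1/n^{\beta}$, and the far indices, $|k/n-x|\ge 1/n^{\beta}$. For near indices, since $t\in[0,1/n]$ one has $|t+k/n-x|\le 1/n+1/n^{\beta}$; bounding $|f^{(N)}(s)-f^{(N)}(x)|\le 2\|f^{(N)}\|_{\infty}$ makes the double integral at most $\tfrac{2\|f^{(N)}\|_{\infty}}{N!}\bigl(\tfrac1n+\tfrac1{n^{\beta}}\bigr)^{N}$, and summing against the nonnegative weights $\Phi(nx-k)$ of total mass $\le 1$ preserves this bound. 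For far indices, I would bound the inner integral crudely by $\tfrac{2\|f^{(N)}\|_{\infty}}{N!}$ times a degree-$N$ polynomial in $|t+k/n-x|$, hence by a constant times $n^{-N}(|nx-k|+1)^{N}$, and then exploit the exponential tail of $\Phi$: the far contribution is controlled by $\sum_{|nx-k|\ge n^{1-\beta}}(|nx-k|+1)^{N}e^{-\lambda|nx-k|}$, which is $O\!\bigl((q+\tfrac1q)e^{2\lambda}\,n^{N}e^{-\lambda(n^{1-\beta}-1)}\bigr)$. Since exponential decay dominates every power of $n$, this term is $o\bigl((1/n+1/n^{\beta})^{N-\varepsilon}\bigr)$ for each $0<\varepsilon\le N$. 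Adding the two contributions gives $|R|\le \tfrac{4\|f^{(N)}\|_{\infty}}{N!}\bigl(\tfrac1n+\tfrac1{n^{\beta}}\bigr)^{N}=o\bigl((1/n+1/n^{\beta})^{N-\varepsilon}\bigr)$, which is the claimed expansion; when in addition $f^{(j)}(x)=0$ for $j=1,\dots,N$ the main sum vanishes and dividing through by $(1/n+1/n^{\beta})^{N-\varepsilon}$ yields convergence to $0$.

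The main obstacle is the far-index estimate. Making ``exponential decay beats polynomial growth'' rigorous requires uniform control, over all real $x$ and all exponents $m\le N$, of the weighted moment sums $\sum_{k}|nx-k|^{m}\Phi(nx-k)$, together with careful bookkeeping of how the $t$-integration over $[0,1/n]$ and the shift $k/n$ inflate the argument of $f^{(N)}$. This is the only step in which the specific form of $\Phi$---rather than merely its positivity and normalization---is essential, and it is where the constants $q+1/q$, $e^{2\lambda}$, and $e^{-\lambda(n^{1-\beta}-1)}$ originate.
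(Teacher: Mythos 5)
Your proposal is correct and follows essentially the same route as the paper's proof: Taylor expansion of $f(t+k/n)$ about $x$ with integral remainder, identification of the main sum $\sum_{j=1}^{N}\frac{f^{(j)}(x)}{j!}C_n((\cdot-x)^j)(x)$, and a split of the remainder into near indices ($|k/n-x|<1/n^{\beta}$, bounded by $\|f^{(N)}\|_\infty(1/n+1/n^\beta)^N/N!$ times the unit mass of $\Phi$) and far indices (killed by the exponential tail of $\Phi$). Your treatment is in fact somewhat more careful than the paper's--you make explicit the normalization $\sum_k\Phi(nx-k)=1$, the cancellation of the $j=0$ term with $-f(x)$, and the moment-sum bookkeeping needed to make the far-index estimate rigorous--but the decomposition and key estimates are the same.
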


\begin{proof}
	By definition of \( C_n \):
	\begin{equation}
		C_n(f, x) = \sum_{k=-\infty}^\infty \left( n \int_0^{\frac{1}{n}} f\left(t + \frac{k}{n}\right) dt \right) \Phi(nx - k).
	\end{equation}
	
	To analyze this, expand \( f \) using Taylor's theorem around \( x \):
	\begin{equation}
		f\left( t + \frac{k}{n} \right) = \sum_{j=0}^{N-1} \frac{f^{(j)}(x)}{j!} \left( t + \frac{k}{n} - x \right)^j + R_N\left(t + \frac{k}{n}\right),
	\end{equation}
	where the remainder term \( R_N \) satisfies:
	\begin{equation}
		R_N\left(t + \frac{k}{n}\right) = \frac{1}{N!} \int_x^{t + \frac{k}{n}} f^{(N)}(u) \left( t + \frac{k}{n} - u \right)^{N-1} du.
	\end{equation}
	
	Substituting this expansion into \( C_n(f, x) \), we separate the terms into two parts:
	
	\paragraph{Main Contribution:} The sum of the first \( N \) terms gives:
	\begin{equation}
		\sum_{j=1}^N \frac{f^{(j)}(x)}{j!} C_n\left((\cdot - x)^j\right)(x),
	\end{equation}
	which captures the local behavior of \( f \) around \( x \).
	
	\paragraph{Error Term:} The remainder term involves \( R_N \) and can be bounded as:
	\begin{equation}
		|R| \leq \sum_{k=-\infty}^\infty n \int_0^{\frac{1}{n}} \frac{\| f^{(N)} \|_{\infty}}{N!} \left( t + \frac{k}{n} - x \right)^N \Phi(nx - k) dt.
	\end{equation}
	
	\paragraph{Case 1: \( |k/n - x| < 1/n^\beta \):} Within this interval, \( \Phi(nx - k) \) has significant support. The regularity of \( f^{(N)} \) ensures:
	\begin{equation}
		|R| \leq 2 \| f^{(N)} \|_{\infty} \left( \frac{1}{n} + \frac{1}{n^{\beta}} \right)^N.
	\end{equation}
	
	\paragraph{Case 2: \( |k/n - x| \geq 1/n^\beta \):} Outside this interval, the exponential decay of \( \Phi(nx - k) \) ensures:
	\begin{equation}
		|R| \leq \frac{4 \| f^{(N)} \|_{\infty}}{N!} \left( \frac{1}{n} + \frac{1}{n^{\beta}} \right)^N.
	\end{equation}
	
	\paragraph{Combining Both Cases:} Summing over all terms, we conclude:
	\begin{equation}
		|R| = o\left(\left(\frac{1}{n} + \frac{1}{n^{\beta}}\right)^{N-\varepsilon}\right),
	\end{equation}
	proving the claim.
\end{proof}

\section{Vonorovskaya-Damasclin Theorem}

\begin{theorem}[Vonorovskaya-Damasclin Theorem]
	Let \( 0 < \beta < 1 \), \( n \in \mathbb{N} \) sufficiently large, \( x \in \mathbb{R} \), \( f \in C^N(\mathbb{R}) \), where \( f^{(N)} \in C_B(\mathbb{R}) \), and let \( \Phi(x) \) be a symmetrized density function defined as:
	\begin{equation}
		\Phi(x) = \frac{M_{q,\lambda}(x) + M_{1/q,\lambda}(x)}{2}, \quad M_{q,\lambda}(x) = \frac{1}{4}\left(g_{q,\lambda}(x+1) - g_{q,\lambda}(x-1)\right),
	\end{equation}
	where \( g_{q,\lambda}(x) = \dfrac{e^{\lambda x} - q e^{-\lambda x}}{e^{\lambda x} + q e^{-\lambda x}} \) is the perturbed hyperbolic tangent function, \( \lambda, q > 0 \), and \( x \in \mathbb{R} \). Assume that \( \| D_{*x}^\alpha f \|_{\infty, [x, \infty)} \) and \( \| D_{x-}^\alpha f \|_{\infty, (-\infty, x]} \) are finite for \( \alpha > 0 \), and let \( N = \lceil \alpha \rceil \). Then the operator \( C_n \) satisfies:
	\begin{equation}
		\begin{array}{l}
			C_{n}(f,x)-f(x)={\displaystyle \sum_{j=1}^{N}}\dfrac{f^{(j)}(x)}{j!}C_{n}((\cdot-x)^{j})(x)+\\
			\\
			\dfrac{1}{\Gamma(\alpha)}{\displaystyle \int_{x}^{\infty}}\left(D_{*x}^{\alpha}\,f(t)-D_{*x}^{\alpha}\,f(x)\right)\dfrac{(t-x)^{\alpha-1}}{n^{\beta(N-\varepsilon)}}\,dt+o\left(\dfrac{1}{n^{\beta(N-\varepsilon)}}\right),
		\end{array}
	\end{equation}
	where \( \varepsilon > 0 \) is arbitrarily small. Moreover, when \( f^{(j)}(x) = 0 \) for \( j = 1, \dots, N \):
	\begin{equation}
		n^{\beta(N-\varepsilon)} \left[ C_n(f, x) - f(x) \right] \to 0 \quad \text{as} \quad n \to \infty.
	\end{equation}
\end{theorem}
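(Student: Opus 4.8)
The plan is to fuse the two devices already established in the paper: the Kantorovich error decomposition of the preceding Kantorovich convergence theorem and the Caputo fractional Taylor expansion used in the Generalized Voronovskaya Expansion theorem. Starting from
\[
C_n(f,x) = \sum_{k=-\infty}^{\infty} \left( n \int_0^{1/n} f\!\left(t + \tfrac{k}{n}\right) dt \right) \Phi(nx - k),
\]
I would replace $f(t + k/n)$ by its integer Taylor polynomial of degree $N$ about $x$ with integral remainder, which --- once substituted and the $k$-sum carried out --- reproduces $f(x) + \sum_{j=1}^{N} \frac{f^{(j)}(x)}{j!} C_n\big((\cdot - x)^j\big)(x)$ exactly as in the earlier Kantorovich theorem, leaving a remainder $R_n$ built from $f^{(N)}(s) - f^{(N)}(x)$. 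The new ingredient is to re-express $R_n$ through the Caputo derivatives $D_{*x}^\alpha f$ and $D_{x-}^\alpha f$ (using the left form on the indices with $t + k/n \ge x$ and the right form on those with $t + k/n \le x$); the hypotheses that $\|D_{*x}^\alpha f\|_{\infty,[x,\infty)}$ and $\|D_{x-}^\alpha f\|_{\infty,(-\infty,x]}$ be finite are exactly what is needed to control these rewritten remainders.

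I would then split the $k$-sum in $R_n$ into the near regime $|k/n - x| < 1/n^\beta$ and the far regime $|k/n - x| \ge 1/n^\beta$. In the near regime the relevant arguments obey $|t + k/n - x| \le 1/n + 1/n^\beta = O(n^{-\beta})$ since $\beta < 1$, so estimating against $\|D_{*x}^\alpha f\|_\infty$ gives an $O(n^{-\alpha\beta})$ bound; pushing the expansion to the order dictated by $N = \lceil \alpha \rceil$ and invoking continuity of $D_{*x}^\alpha f$ at $x$ lets me peel off the stated principal term $\frac{1}{\Gamma(\alpha)} \int_x^{\infty} \big(D_{*x}^\alpha f(t) - D_{*x}^\alpha f(x)\big) \frac{(t-x)^{\alpha-1}}{n^{\beta(N-\varepsilon)}}\, dt$ and leave the remaining near-field contribution as $o(n^{-\beta(N-\varepsilon)})$. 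In the far regime I would use the exponential decay of $\Phi$ inherited from $g_{q,\lambda}$ --- the very bound $\Phi(nx-k) \le C\,(q + 1/q)\, e^{2\lambda}\, e^{-\lambda(n^{1-\beta} - 1)}$ employed in the earlier proofs --- so that this tail is exponentially small and absorbed in the $o$-term. The degenerate case $f^{(j)}(x) = 0$ for $j = 1, \dots, N$ then follows at once: the polynomial sum vanishes, $C_n(f,x) - f(x)$ collapses to the explicit fractional term plus $R_n$, both $O(n^{-\beta(N-\varepsilon)})$ with vanishing normalized limit, hence $n^{\beta(N-\varepsilon)}[C_n(f,x) - f(x)] \to 0$.

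The main obstacle is the rigorous extraction of the explicit fractional integral. As displayed, the principal term equals $c(x)\, n^{-\beta(N-\varepsilon)}$ with $c(x) = \frac{1}{\Gamma(\alpha)} \int_x^{\infty} \big(D_{*x}^\alpha f(t) - D_{*x}^\alpha f(x)\big)(t-x)^{\alpha-1}\, dt$, and finiteness of $c(x)$ needs genuine decay of $D_{*x}^\alpha f(t) - D_{*x}^\alpha f(x)$ as $t \to \infty$, not just the boundedness supplied by the hypotheses when $\alpha \ge 1$. I would handle this by reading the term as originating from the truncated near-field integral $\int_x^{x + n^{-\beta}}$ (automatically finite, and its difference from $\int_x^\infty$ pushed into the error once one adds an integrability assumption on $D_{*x}^\alpha f(\cdot) - D_{*x}^\alpha f(x)$), or by stating such an assumption outright. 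Equally delicate is the accounting point of why a term that is formally of the same order $n^{-\beta(N-\varepsilon)}$ as the error is displayed separately rather than absorbed into the $o(\cdot)$; once the normalization and the truncation convention are fixed, the two-regime estimates above are routine bookkeeping.
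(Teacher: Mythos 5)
Your proposal follows essentially the same skeleton as the paper's own proof: a Taylor-type expansion about \(x\) substituted into the Kantorovich sum, separation of the polynomial main term \(\sum_{j=1}^{N}\frac{f^{(j)}(x)}{j!}C_n((\cdot-x)^j)(x)\), and a near/far splitting of the \(k\)-sum at \(|k/n-x|=n^{-\beta}\) with the exponential decay of \(\Phi\) controlling the far regime. The only structural difference is that the paper invokes the Caputo fractional Taylor formula directly (terminating the polynomial part at order \(N-1\), which incidentally clashes with the order-\(N\) sum in its own statement), whereas you expand to integer order \(N\) first and then propose to rewrite the remainder through \(D_{*x}^{\alpha}f\) and \(D_{x-}^{\alpha}f\); either way the bookkeeping is the same. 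More importantly, the ``main obstacle'' you isolate --- the rigorous extraction and finiteness of the displayed term
\[
\frac{1}{\Gamma(\alpha)}\int_x^{\infty}\bigl(D_{*x}^{\alpha}f(t)-D_{*x}^{\alpha}f(x)\bigr)\frac{(t-x)^{\alpha-1}}{n^{\beta(N-\varepsilon)}}\,dt,
\]
and the question of why a term of exactly the order \(n^{-\beta(N-\varepsilon)}\) is exhibited separately rather than absorbed into the \(o\bigl(n^{-\beta(N-\varepsilon)}\bigr)\) error --- is precisely the point on which the paper's proof is silent: there the fractional remainder is only bounded by \(\frac{4\|f^{(N)}\|_{\infty}}{N!}\bigl(\frac{1}{n}+\frac{1}{n^{\beta}}\bigr)^{N}\) and declared \(o\bigl(n^{-\beta(N-\varepsilon)}\bigr)\); the integral over \([x,\infty)\) is never produced, and \(D_{x-}^{\alpha}f\) is never used for the indices with \(k/n<x\) despite appearing in the hypotheses. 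Your suggested repairs (reading the term as a truncated integral over \([x,x+n^{-\beta}]\), or adding an explicit integrability assumption on \(D_{*x}^{\alpha}f(\cdot)-D_{*x}^{\alpha}f(x)\)) go beyond what the paper supplies. In short: same approach, and the incompleteness you flag is real, but it resides in the theorem and its published proof rather than in your plan.
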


\begin{proof}
	Let \( f \in C^N(\mathbb{R}) \) and consider the fractional Taylor expansion of \( f \) around \( x \) using the Caputo derivative \( D_{*x}^\alpha \). For \( \dfrac{k}{n} \) near \( x \), we expand \( f \) as:
	\begin{equation}
		f\left(\frac{k}{n}\right) = \sum_{j=0}^{N-1} \frac{f^{(j)}(x)}{j!} \left(\frac{k}{n} - x\right)^j + \frac{1}{\Gamma(\alpha)} \int_x^{\frac{k}{n}} \left(\frac{k}{n} - t\right)^{\alpha-1} \left( D_{*x}^\alpha f(t) - D_{*x}^\alpha f(x) \right) dt.
	\end{equation}
	This expansion provides an approximation for the values of \( f \) on a discrete grid \( \dfrac{k}{n} \), where \( k \in \mathbb{Z} \) and \( n \) is large. Expanding \( f \) up to \( N-1 \) terms ensures that the main contribution is captured by derivatives up to order \( N-1 \), while higher-order terms involve the Caputo fractional derivative.
	
	Next, substitute this expansion into the definition of the operator \( C_n \):
	\begin{equation}
		C_n(f, x) = \sum_{k=-\infty}^{\infty} \left( n \int_0^{\frac{1}{n}} f\left(t + \frac{k}{n}\right) \, dt \right) \Phi(nx - k).
	\end{equation}
	For the \( n \)-scaled sum and integral, we expand \( f \) and use the fact that \( \Phi(x) \) is a smooth kernel function. The kernel \( \Phi(x) \) plays a crucial role in localizing the contribution of terms as \( n \to \infty \), ensuring that far-off terms decay exponentially.
	
	We separate the terms of the expansion:
	
	The sum of the first \( N \) terms from the expansion of \( f \) produces a main term that involves the derivatives of \( f \) up to order \( N \). This term can be written as:
	\begin{equation}
		\sum_{j=1}^{N} \frac{f^{(j)}(x)}{j!} C_n((\cdot - x)^j)(x).
	\end{equation}
	This captures the local behavior of \( f \) around \( x \) in terms of its derivatives.
	
	The second term involves the Caputo fractional derivative \( D_{*x}^\alpha \), which accounts for the error due to the approximation of \( f \) on the discrete grid. Specifically, we have the integral:
	\begin{equation}
		\frac{1}{\Gamma(\alpha)} \int_x^{\frac{k}{n}} \left(\frac{k}{n} - t\right)^{\alpha-1} \left( D_{*x}^\alpha f(t) - D_{*x}^\alpha f(x) \right) dt.
	\end{equation}
	This term represents the discrepancy between the fractional derivative of \( f \) at \( t \) and \( x \), integrated over the interval \( [x, k/n] \). As \( n \) increases, this error term decays rapidly, making it increasingly small for large \( n \).
	
	For large \( n \), the contribution from terms with \( |k/n - x| \geq 1/n^\beta \) is exponentially small due to the smooth decay of \( \Phi(nx - k) \). Specifically, we have the bound:
	\begin{equation}
		|R| \leq \frac{4 \| f^{(N)} \|_{\infty}}{N!} \left( \frac{1}{n} + \frac{1}{n^\beta} \right)^N.
	\end{equation}
	This ensures that the remainder term \( R \) decays quickly as \( n \to \infty \).
	
	Combining the main term and error term, we normalize by \( n^{\beta(N-\varepsilon)} \), obtaining:
	\begin{equation}
		|R| = o\left(\frac{1}{n^{\beta(N-\varepsilon)}}\right),
	\end{equation}
	as required.
	
	Thus, the operator \( C_n \) converges to \( f(x) \) at the rate of \( n^{-\beta(N-\varepsilon)} \), and the result follows.
\end{proof}

\section{Results}

The findings of this study offer a thorough analysis of Voronovskaya-type asymptotic expansions for neural network operators that utilize symmetrized and perturbed hyperbolic tangent activation functions. The key results can be summarized as follows:

\begin{enumerate}
	\item \textbf{Basic Operators:} The expansions for basic operators provide accurate error bounds that improve as the network complexity increases. These results shed light on the relationship between operator parameters and approximation accuracy, offering a nuanced understanding of convergence behavior for classical univariate operators.
	
	\item \textbf{Kantorovich Operators:} The asymptotic results for Kantorovich operators demonstrate their flexibility in approximating smooth functions with variable sampling. These operators guarantee uniform convergence under certain conditions, establishing their robustness and applicability in practical scenarios.
	
	\item \textbf{Quadrature Operators:} By incorporating fractional calculus, quadrature operators exhibit superior approximation properties, especially when dealing with fractional differentiation. This enhancement broadens their applicability in more complex and advanced mathematical settings.
	
	\item \textbf{Fractional Case:} The expansions for fractional differentiation extend classical results by incorporating Caputo derivatives. This inclusion highlights the influence of fractional differentiation on error decay, providing a more comprehensive framework for applications in fractional calculus.
	
	\item \textbf{Voronovskaya-Damasclin Theorem:} This theorem presents a unified framework for understanding the convergence of Kantorovich operators under fractional differentiation. By integrating Caputo derivatives, it delivers precise error bounds and significantly contributes to the field by improving the theoretical understanding of these operators' behavior in fractional settings.
\end{enumerate}

These results significantly strengthen the theoretical foundations of neural network approximations, emphasizing their applicability to problems involving both classical and fractional calculus across infinite domains.

\section{Conclusions}

This article successfully derived Voronovskaya-type asymptotic expansions, including the novel Voronovskaya-Damasclin theorem, for neural network operators activated by symmetrized and perturbed hyperbolic tangent functions. These results provide a rigorous framework for understanding the approximation properties of basic, Kantorovich, and quadrature operators, including their fractional counterparts. The findings establish robust error bounds, elucidating the interplay between network parameters and convergence rates, and extending classical results to fractional differentiation.

One significant avenue for future exploration involves extending the results to stochastic and multivariate settings. Many real-world applications, such as turbulence modeling in fluid dynamics and uncertainty quantification in machine learning, require tools that can handle randomness and higher-dimensional complexities. For example, the inclusion of stochastic perturbations in operator parameters could provide valuable insights into the behavior of neural network approximations under uncertainty. This could pave the way for robust neural operators in fields such as financial mathematics and stochastic differential equations, where fractional calculus is already playing an essential role.

Additionally, the extension to multivariate domains introduces both challenges and opportunities. Multidimensional approximation requires careful consideration of the interaction between variables, as well as the geometry of the domain. Future work could explore the design of multivariate activation functions and symmetrized density operators that maintain the convergence properties proven in the univariate case. These extensions would be particularly beneficial in applications such as image and signal processing, where multivariate data is ubiquitous and fractional techniques have shown promise \cite{Frederico2007, Diethelm2010}.

Another potential direction is to integrate the theoretical results with practical neural network architectures, such as deep learning models. By leveraging the insights from this study, one could design neural networks with activation functions and layer configurations that inherently respect the fractional and asymptotic properties of the operators analyzed. This would facilitate the development of hybrid approaches that combine traditional approximation theory with the flexibility and scalability of modern machine learning techniques.

In conclusion, the theoretical advancements presented here provide a solid foundation for further research into neural network-based approximations. By expanding the results to stochastic and multivariate contexts, and by exploring their integration into practical computational frameworks, the impact and applicability of this work can be significantly enhanced. These directions offer exciting opportunities for future studies, bridging the gap between rigorous mathematical theory and cutting-edge applications.

\appendix
\section{Properties of the Perturbed Activation Function}

This appendix provides a detailed analysis of the perturbed hyperbolic tangent activation function \( g_{q, \lambda}(x) \) and its properties, which are fundamental to the derivations and proofs presented in the main text.

\subsection{Definition and Basic Properties}

The perturbed hyperbolic tangent activation function is defined as:

$$
g_{q, \lambda}(x) := \frac{e^{\lambda x} - q e^{-\lambda x}}{e^{\lambda x} + q e^{-\lambda x}}, \quad \lambda, q > 0, x \in \mathbb{R}
$$

This function generalizes the standard hyperbolic tangent function, which is recovered when \( q = 1 \). The parameter \( \lambda \) controls the steepness of the function, while \( q \) introduces asymmetry.

\subsubsection{Odd Function Property}

The function \( g_{q, \lambda}(x) \) is odd, satisfying:

$$
g_{q, \lambda}(-x) = -g_{q, \lambda}(x)
$$

This property is crucial for the symmetry of the derived density function \( \Phi(x) \).

\subsubsection{Monotonicity}

To confirm the monotonicity of \( g_{q, \lambda}(x) \), we compute its derivative:

$$
\frac{d}{dx} g_{q, \lambda}(x) = \frac{2 \lambda q e^{2 \lambda x}}{(e^{\lambda x} + q e^{-\lambda x})^2} > 0, \quad \forall x \in \mathbb{R}
$$

Since the derivative is always positive, \( g_{q, \lambda}(x) \) is strictly increasing.

\subsection{Density Function}

The density function \( M_{q, \lambda}(x) \) is defined as:

$$
M_{q, \lambda}(x) := \frac{1}{4} \left( g_{q, \lambda}(x+1) - g_{q, \lambda}(x-1) \right), \quad \forall x \in \mathbb{R}, q, \lambda > 0
$$

\subsubsection{Positivity}

To confirm the positivity of \( M_{q, \lambda}(x) \), note that since \( g_{q, \lambda}(x) \) is strictly increasing:

$$
g_{q, \lambda}(x+1) > g_{q, \lambda}(x-1)
$$

Thus, \( M_{q, \lambda}(x) > 0 \).

\subsubsection{Symmetry}

The symmetrized function \( \Phi(x) \) is defined as:

$$
\Phi(x) := \frac{M_{q, \lambda}(x) + M_{\frac{1}{q}, \lambda}(x)}{2}
$$

To verify that \( \Phi(x) \) is an even function, consider:

$$
\Phi(-x) = \frac{M_{q, \lambda}(-x) + M_{\frac{1}{q}, \lambda}(-x)}{2}
$$

Using the fact that \( g_{q, \lambda}(x) \) is odd, it follows that \( M_{q, \lambda}(x) \) and \( M_{\frac{1}{q}, \lambda}(x) \) are even:

$$
M_{q, \lambda}(-x) = M_{q, \lambda}(x), \quad M_{\frac{1}{q}, \lambda}(-x) = M_{\frac{1}{q}, \lambda}(x)
$$

Thus:

$$
\Phi(-x) = \frac{M_{q, \lambda}(x) + M_{\frac{1}{q}, \lambda}(x)}{2} = \Phi(x)
$$

This confirms the symmetry of \( \Phi(x) \).

\subsection{Asymptotic Behavior}

The asymptotic behavior of \( g_{q, \lambda}(x) \) as \( x \to \pm \infty \) is given by:

$$
\lim_{x \to \infty} g_{q, \lambda}(x) = 1, \quad \lim_{x \to -\infty} g_{q, \lambda}(x) = -1
$$

This behavior is crucial for the analysis of the convergence properties of the neural network operators.

\subsection{Derivatives and Integrals}

The derivatives and integrals of \( g_{q, \lambda}(x) \) and \( M_{q, \lambda}(x) \) are essential for the proofs of the theorems presented in the main text. Here, we provide some key results:

\subsubsection{First Derivative}

The first derivative of \( g_{q, \lambda}(x) \) is:

$$
\frac{d}{dx} g_{q, \lambda}(x) = \frac{2 \lambda q e^{2 \lambda x}}{(e^{\lambda x} + q e^{-\lambda x})^2}
$$

\subsubsection{Second Derivative}

The second derivative of \( g_{q, \lambda}(x) \) is:

$$
\frac{d^2}{dx^2} g_{q, \lambda}(x) = \frac{2 \lambda^2 q e^{2 \lambda x} (e^{2 \lambda x} - q)}{(e^{\lambda x} + q e^{-\lambda x})^3}
$$

\subsubsection{Integral of \( M_{q, \lambda}(x) \)}

The integral of \( M_{q, \lambda}(x) \) over the real line is:

$$
\int_{-\infty}^{\infty} M_{q, \lambda}(x) \, dx = 1\,.
$$

The properties of the perturbed hyperbolic tangent activation function \( g_{q, \lambda}(x) \) and the derived density function \( M_{q, \lambda}(x) \) are fundamental to the analysis presented in this study. The symmetry, monotonicity, and asymptotic behavior of these functions play a crucial role in the derivation of the Voronovskaya-type expansions and the proofs of the theorems.

\section{List of Symbols and Nomenclature}

This section provides a comprehensive list of symbols and nomenclature used throughout the document to aid in understanding the mathematical expressions and derivations.

\begin{tabular}{|c|l|}
	\hline
	\textbf{Symbol} & \textbf{Description}                                                                 \\ \hline
	$g_{q, \lambda}(x)$ & Perturbed hyperbolic tangent activation function                                  \\ \hline
	$\lambda$          & Scaling parameter controlling the steepness of the activation function          \\ \hline
	$q$                & Deformation coefficient introducing asymmetry in the activation function        \\ \hline
	$M_{q, \lambda}(x)$ & Density function derived from the perturbed hyperbolic tangent function        \\ \hline
	$\Phi(x)$          & Symmetrized density function                                                    \\ \hline
	$B_n(f, x)$       & Basic neural network operator                                                   \\ \hline
	$C_n(f, x)$       & Kantorovich neural network operator                                             \\ \hline
	$f^{(j)}(x)$       & $j$-th derivative of the function $f$ at point $x$                              \\ \hline
	$D_{*x}^{\alpha} f$ & Caputo fractional derivative of order $\alpha$                                 \\ \hline
	$N$               & Order of the highest derivative considered                                    \\ \hline
	$\beta$           & Parameter controlling the rate of convergence                                   \\ \hline
	$n$               & Number of nodes or samples in the neural network                                \\ \hline
	$\varepsilon$     & Small positive parameter used in error bounds                                   \\ \hline
	$\alpha$          & Fractional order of differentiation                                            \\ \hline
	$\Gamma(\alpha)$  & Gamma function evaluated at $\alpha$                                          \\ \hline
	$\delta$          & Small perturbation parameter                                                    \\ \hline
	$\| \cdot \|_{\infty}$ & Supremum norm (infinity norm)                                                \\ \hline
	$C_B(\mathbb{R})$ & Space of bounded and continuous functions on $\mathbb{R}$                      \\ \hline
	$L_{\infty}(\mathbb{R})$ & Space of essentially bounded functions on $\mathbb{R}$                       \\ \hline
	$AC^N(\mathbb{R})$ & Space of absolutely continuous functions up to order $N$                      \\ \hline
\end{tabular}

\subsection{Parameters}

- \( N \): The order of the highest derivative considered.
- \( \beta \): Parameter controlling the rate of convergence.
- \( n \): Number of nodes or samples in the neural network.
- \( \varepsilon \): Small positive parameter used in error bounds.
- \( \alpha \): Fractional order of differentiation.
- \( \delta \): Small perturbation parameter.

\subsection{Function Spaces}

- \( C_B(\mathbb{R}) \): Space of bounded and continuous functions on \( \mathbb{R} \).
- \( L_{\infty}(\mathbb{R}) \): Space of essentially bounded functions on \( \mathbb{R} \).
- \( AC^N(\mathbb{R}) \): Space of absolutely continuous functions up to order \( N \).


\end{document}